\numberwithin{equation}{section}
\numberwithin{theorem}{section}
\numberwithin{problem}{section}
\numberwithin{lemma}{section}
\numberwithin{corollary}{section}
\numberwithin{remark}{section}
\numberwithin{example}{section}
\numberwithin{proposition}{section}
\let\c@proposition\c@theorem
\let\c@corollary\c@theorem
\let\c@remark\c@theorem
\let\c@problem\c@theorem
\let\c@lemma\c@theorem
\let\c@definition\c@theorem
\let\c@example\c@theorem
\newcommand{\ra}[1]{\renewcommand{\arraystretch}{#1}}
\newcommand{\N}{\mathbb N}
\newcommand{\dx}{\mbox{${\rm\,d}x$}}
\begin{document}


\title{Efficient modified Jacobi-Bernstein basis transformations}

\author{Przemys{\l}aw Gospodarczyk        \and
        Pawe{\l} Wo\'{z}ny
}

\institute{P. Gospodarczyk (Corresponding author) \at Institute of Computer Science, University of Wroc{\l}aw,
              ul.~F.~Joliot-Curie 15, 50-383 Wroc{\l}aw, Poland\\
              Fax: +48713757801\\
              \email pgo@ii.uni.wroc.pl
           \and P. Wo\'{z}ny \at Institute of Computer Science, University of Wroc{\l}aw,
                ul.~F.~Joliot-Curie 15, 50-383 Wroc{\l}aw, Poland\\
                \email Pawel.Wozny@cs.uni.wroc.pl
}

\date{\today}

\maketitle

\begin{abstract}
In the paper, we show that the transformations between modified Jacobi and Bernstein bases of the constrained space of polynomials of degree at most $n$
can be performed with the complexity $O(n^2)$. As a result, the algorithm of degree reduction of B\'ezier curves that was first presented in (Bhrawy et al., J.~Comput.~Appl.~Math.~302 (2016), 369--384), and then corrected in (Lu and Xiang, J.~Comput.~Appl.~Math.~315 (2017), 65--69), can be significantly improved, since the necessary transformations are done in those papers with the complexity $O(n^3)$. The comparison of running times shows that our transformations are also faster in practice.

\keywords{Bernstein polynomials \and modified Jacobi polynomials \and Hahn polynomials \and dual Hahn polynomials \and degree reduction of B\'ezier curves}

\end{abstract}


\section{Introduction}\label{sec:intro}

Recently, Bhrawy et al.~\cite{B16} presented an algorithm of degree reduction of B\'ezier curves with parametric continuity constraints.
In \cite{LX16}, Lu and Xiang corrected that algorithm. Moreover, they solved the problem of degree
reduction of B\'ezier curves with geometric continuity constraints by extending the original algorithm.
Both methods of degree reduction are based on transformations between modified Jacobi and Bernstein bases of the
\textit{constrained space of polynomials of degree at most $n$}. Those transformations are computed in \cite{B16,LX16}
with the complexity $O(n^3)$ (see \cite[Theorems 1 and 2]{LX16}).

In \cite{Woz13}, efficient  transformations between shifted Jacobi and Bernstein bases of the
\textit{unconstrained space of polynomials of degree at most $n$} were proposed. The idea was to consider the connection coefficients written in terms of Hahn polynomials. Those representations were given by Ciesielski \cite{Cie87} and Ronveaux et al.~\cite{R98}. Then, recurrence relations for the coefficients were obtained using recurrence relations for Hahn polynomials. Consequently, the transformations in \cite{Woz13},
which can be used in a method of \textit{unconstrained degree reduction of B\'ezier curves}, are performed with the complexity $O(n^2)$.
Unfortunately, it seems that those results have not been noticed by the CAGD community. Perhaps because in CAGD we often look for an optimal element
(e.g., a degree reduced curve) that is constrained by some continuity conditions. As a result, transformations between bases of the constrained
space of polynomials are more useful. Therefore, the main goal of this paper is to recall the approach from \cite{Woz13} and generalize it
in order to give methods of computing the transformations between modified Jacobi and Bernstein bases with the lowest complexity among existing algorithms, namely $O(n^2)$. The new results significantly improve the methods of \textit{constrained degree reduction of B\'ezier curves} from \cite{B16,LX16}.

In \cite{Doh14}, Doha et al.~proposed an algorithm of constrained degree reduction of B\'ezier curves based on the generalized Jacobi-Bernstein basis transformations. However, those transformations are performed there with the complexity $O(n^3)$. Since the generalized Jacobi polynomials are closely related to the shifted Jacobi polynomials, we notice that the searched connection coefficients depend on Hahn polynomials. Consequently, they can be computed with the complexity $O(n^2)$ using recurrence relations similar to the ones that we present for the modified Jacobi-Bernstein basis transformations.

The paper is organized as follows.
In Section \ref{Sec:Prelim}, we recall some useful definitions and properties. Next, we give two different recurrence relations for the coefficients
of the Bernstein form of the modified Jacobi polynomials (see Section \ref{Sec:JtB}). As a result, there are two different ways of computing those coefficients. Both methods have the complexity $O(n^2)$. In Section \ref{Sec:BtJ}, we solve the reverse problem, i.e., we propose two different methods of
computing the coefficients of the modified Jacobi form of the Bernstein polynomials. Once again, our algorithms have the complexity $O(n^2)$ and they are based on recurrence relations. Moreover, a remark on the generalized Jacobi-Bernstein basis transformations is made. In Section~\ref{Sec:Ex}, we compare the running times of our methods with the running times of the methods from \cite{B16,LX16}. Section~\ref{Sec:Conc} concludes the paper.

\section{Preliminaries}\label{Sec:Prelim}

\subsection{Bernstein and shifted Jacobi bases of the unconstrained space of polynomials}\label{Sec:PrelimJB}

Let $\Pi_n$ be the space of polynomials of degree at most $n$. As is known, \textit{Bernstein polynomials} of degree $n$,
\begin{equation}\label{E:Bern}
   B^n_{j}(x):=\binom nj x^j(1-x)^{n-j} \qquad (j=0,1,\ldots,n),
\end{equation}
form a basis of this space. The \textit{shifted Jacobi polynomials},
\begin{equation}\label{E:Jac}
R_i^{(\alpha,\beta)}(x):= \frac{(\alpha+1)_i}{i!}\sum_{j=0}^{i}\frac{(-i)_j(i+\alpha+\beta+1)_j}{j!(\alpha+1)_j}(1-x)^j \qquad
(i=0,1,\ldots,n;\;\alpha,\beta > -1),
\end{equation}
where
$$
(h)_0 := 1, \qquad (h)_i := h(h+1)\cdots(h+i-1),
$$
form a different basis of $\Pi_n$. In contrast to Bernstein polynomials, they are orthogonal
with respect to the \textit{Jacobi inner product}
\begin{equation}\label{E:Inn}
\langle f,\,g \rangle:=\int_{0}^{1} (1-x)^\alpha x^\beta f(x)g(x)\dx \qquad (\alpha,\beta > -1),
\end{equation}
and, as a result, more useful in the context of degree reduction of polynomial curves with respect to the weighted $L_2$-norm.
In \cite{Woz13}, one of us presented efficient methods of conversion between shifted Jacobi and Bernstein bases.
Those algorithms have the complexity $O(n^2)$. See also \cite{CW02,Cie87,Doh14,Far00,LZ98,Rab03,Rab04,R98,Sun05}.

\subsection{Bernstein and modified Jacobi bases of the constrained space of polynomials}\label{Sec:ConPrelimJB}

In order to solve the problem of degree reduction of B\'ezier curves with continuity constraints at the endpoints,
it is useful to consider the following restriction of the space $\Pi_n$ (see, e.g., \cite{GLW15,GLW17,WL09}).
Let $\Pi_n^{(k,l)}$, where $k$ and $l$ are nonnegative integers such that $k+l\le n$,
be the space of all polynomials of  degree at most $n$,
whose derivatives of order less than $k$ at $t=0$, as well as derivatives of order
less than $l$ at $t=1$, vanish:
\[
\Pi_n^{(k,l)}:=
       \left\{P\in\Pi_n\::\:
       P^{(i)}(0)=0\quad (0\le i\le k-1)
       \; \mbox{and}\;
       P^{(j)}(1)=0\quad (0\le j\le l-1)\right\}.
\]
 As is known,  $\mbox{dim}\;\Pi_n^{(k,l)}=n-k-l+1$, and the following Bernstein polynomials:
\begin{equation}\label{E:Bern2}
B^n_k,B^n_{k+1},\ldots,B^n_{n-l}
\end{equation}
(cf.~\eqref{E:Bern}) form a basis of this space. Observe that $\Pi_n^{(0,0)} \equiv \Pi_n$.
Recall that the constrained problem of degree reduction of B\'ezier curves is often formulated
as a minimization problem of the weighted $L_2$-distance (see, e.g., \cite{HB16,B16,Doh14,GLW15,LX16,WL09}).
Therefore, an orthogonal basis of the space $\Pi_n^{(k,l)}$ with respect to the inner product \eqref{E:Inn} can play a crucial
role in that context. Since Bernstein polynomials \eqref{E:Bern2} are not orthogonal, a different basis is needed.
In \cite{B16}, Bhrawy et al.~introduced the \textit{modified Jacobi polynomials},
\begin{equation}\label{E:MJP}
J_{i,k,l}^{(\alpha,\beta)}(x) := (1-x)^lx^kR^{(\alpha+2l,\beta+2k)}_{i-k-l}(x) \qquad
(i=k+l,k+l+1,\ldots,n;\;\alpha,\beta > -1)
\end{equation}
(cf.~\eqref{E:Jac}), which form an orthogonal basis of the space $\Pi_n^{(k,l)}$ with respect to the inner product
\eqref{E:Inn}. In this paper, we generalize the results from \cite{Woz13} in order to show that the transformations
between the modified Jacobi \eqref{E:MJP} and Bernstein bases \eqref{E:Bern2} can be done with the complexity $O(n^2)$.
More precisely, we give methods of computing the connection coefficients $c_{ih} \equiv c_{ih}(n,k,l;\alpha,\beta)$ and $d_{hi} \equiv d_{hi}(n,k,l;\alpha,\beta)$ that satisfy
\begin{align}
&J_{i,k,l}^{(\alpha,\beta)}(x) = \sum_{h=k}^{n-l} c_{ih}B_h^n(x) \qquad (i = k+l,k+l+1,\ldots,n),\notag\\[1ex]
&B_h^n(x) = \sum_{i=k+l}^{n} d_{hi}J_{i,k,l}^{(\alpha,\beta)}(x) \qquad (h = k,k+1,\ldots,n-l).\label{E:BdJDef}
\end{align}
Recall that these coefficients were first given in \cite{B16}, and then corrected
in \cite{LX16}, namely
\begin{align}
&c_{ih} = \binom{n}{h}^{-1}\sum_{r=\max(0,h+i-n-k)}^{\min(h-k,i-l-k)}(-1)^{i-l-k-r}\binom{i+\alpha+l-k}{r}\binom{i+\beta-l+k}{i-l-k-r}\binom{n-i}{h-k-r},\label{E:cLX}\\[1ex]
&d_{hi} =  g_{i,l,k}^{(\alpha,\beta)}\sum_{r=0}^{i-l-k}(-1)^{i-l-k-r}
\binom{i+\alpha+l-k}{r}\binom{i+\beta-l+k}{i-l-k-r}\binom{n+i+\alpha+\beta}{h+\beta+k+r}^{-1},\label{E:dLX}
\end{align}
where
$$
g_{i,l,k}^{(\alpha,\beta)} := \binom{n}{h}\frac{(2i+\alpha+\beta+1)(i-l-k)!\Gamma(i+l+k+\alpha+\beta+1)}{(n+i+\alpha+\beta+1)\Gamma(i+l-k+\alpha+1)\Gamma(i-l+k+\beta+1)},
$$
and the binomial coefficients are generalized to noninteger arguments,
$$
\binom{y}{t} := \frac{\Gamma(y+1)}{\Gamma(t+1)\Gamma(y-t+1)},
$$
with $\Gamma$ being the gamma function (see, e.g., \cite[Section 1.1]{AAR99}). However, such an approach leads to the complexity $O(n^3)$. Moreover, cumbersome computations of the gamma functions are required. As we shall see, our methods are not only more efficient but also avoid computing the gamma functions.

\subsection{Hahn and dual Hahn polynomials}\label{Sec:PrelimHahn}

Now, we give a short introduction to Hahn and dual Hahn polynomials because they are main tools in our
efficient methods of modified Jacobi-Bernstein basis transformations.

\textit{Hahn polynomials} are given by
\begin{equation}\label{E:H}
Q_n(x;\, \alpha,\, \beta,\, N) := \sum_{j=0}^{n}\frac{(-n)_j(n+\alpha+\beta+1)_j(-x)_j}{j!(\alpha+1)_j(-N)_j}
\qquad (n=0,1,\ldots,N;\; N \in \N),
\end{equation}
where $\alpha, \beta > -1$. They are orthogonal with respect to a discrete inner product (see \cite[(1.5.2)]{KS98}), and satisfy a
\textit{three-term recurrence relation} (see, e.g., \cite[(1.5.3)]{KS98}),
\begin{equation}\label{E:HRec}
-xQ_n(x) = A_nQ_{n+1}(x) - (A_n + C_n)Q_n(x) + C_nQ_{n-1}(x),
\end{equation}
where $Q_n(x) \equiv Q_n(x;\, \alpha,\, \beta,\, N)$,
\begin{align*}
& A_n := \frac{(n+\alpha+\beta+1)(n+\alpha+1)(N-n)}{(2n+\alpha+\beta+1)(2n+\alpha+\beta+2)},\\[1ex]
& C_n := \frac{n(n+\alpha+\beta+N+1)(n+\beta)}{(2n+\alpha+\beta)(2n+\alpha+\beta+1)}.
\end{align*}
Further on in the paper, we will need the following \textit{symmetry property} (see, e.g., \cite[p.~346]{AAR99}):
\begin{equation}\label{E:QSym}
Q_n(x;\, \alpha,\, \beta,\, N)=(-1)^n \frac{(\beta+1)_n}{(\alpha+1)_n} Q_n(N-x;\, \beta,\, \alpha,\, N).
\end{equation}

\textit{Dual Hahn polynomials} are given by
\begin{equation}\label{E:Rn}
R_n(\lambda(x);\, \alpha,\, \beta,\, N) := \sum_{j=0}^{n}\frac{(-n)_j(x+\alpha+\beta+1)_j(-x)_j}{j!(\alpha+1)_j(-N)_j}
\qquad (n=0,1,\ldots,N;\; N \in \N),
\end{equation}
where $\alpha, \beta > -1$ and $\lambda(x) = x(x+\alpha+\beta+1)$. They are orthogonal with respect to a discrete inner product (see \cite[(1.6.2)]{KS98}), and satisfy a three-term recurrence relation (see, e.g., \cite[(1.6.3)]{KS98}),
\begin{equation}\label{E:DRec}
\lambda(x)R_n(\lambda(x)) = B_nR_{n+1}(\lambda(x)) - (B_n + D_n)R_n(\lambda(x)) + D_nR_{n-1}(\lambda(x)),
\end{equation}
where $R_n(\lambda(x)) \equiv R_n(\lambda(x);\, \alpha,\, \beta,\, N)$,
\begin{align*}
& B_n := (n+\alpha+1)(n-N),\\[1ex]
& D_n := n(n-\beta-N-1).
\end{align*}
As is known, Hahn and dual Hahn polynomials are related (see, e.g., \cite[Section 1.6]{KS98}),
\begin{equation}\label{dHH}
R_n(\lambda(x);\, \alpha,\, \beta,\, N) = Q_x(n;\, \alpha,\, \beta,\, N).
\end{equation}

\section{Bernstein form of modified Jacobi polynomials}\label{Sec:JtB}

The following lemma is a generalization of the result from \cite{Cie87}, where the \textit{classic Jacobi polynomials} on the interval $[-1,\,1]$ were considered, and from \cite{Woz13}, where the shifted Jacobi polynomials \eqref{E:Jac} were studied which corresponds to the case without any constraints, i.e., $k=l=0$ (see \eqref{E:MJP}).

\begin{lemma}\label{L:JB}
Modified Jacobi polynomials \eqref{E:MJP} have the following representation in the Bernstein basis \eqref{E:Bern2}:
\begin{equation*}
J_{i,k,l}^{(\alpha,\beta)}(x) = \sum_{h=k}^{n-l} c_{ih}B_h^n(x) \qquad (i = k+l,k+l+1,\ldots,n),
\end{equation*}
where
\begin{equation}\label{E:JBHc}
c_{ih} := \frac{(\alpha+2l+1)_{i-k-l}}{(i-k-l)!}\binom{n}{h}^{-1}\binom{n-k-l}{h-k}Q_{i-k-l}(n-l-h;\, \alpha+2l,\, \beta+2k,\, n-k-l).
\end{equation}
\end{lemma}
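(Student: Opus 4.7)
The plan is to derive the formula directly by substitution, expansion, and reindexing, recognizing at the end that the inner sum is precisely a Hahn polynomial.

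First, I would start from the definition \eqref{E:MJP}, substitute the explicit hypergeometric series representation \eqref{E:Jac} of $R^{(\alpha+2l,\beta+2k)}_{i-k-l}(x)$, and factor out the constants. This yields
\begin{equation*}
J_{i,k,l}^{(\alpha,\beta)}(x) = \frac{(\alpha+2l+1)_{i-k-l}}{(i-k-l)!}\sum_{j=0}^{i-k-l}\frac{(-(i-k-l))_j(i+k+l+\alpha+\beta+1)_j}{j!\,(\alpha+2l+1)_j}\, x^k(1-x)^{l+j}.
\end{equation*}

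Next, I would expand the building block $x^k(1-x)^{l+j}$ in the Bernstein basis \eqref{E:Bern2}. Writing $1=(x+(1-x))^{n-k-l-j}$ by the binomial theorem and substituting, one gets the elementary identity
\begin{equation*}
x^k(1-x)^{l+j}=\sum_{h=k}^{n-l-j}\binom{n-k-l-j}{h-k}\binom{n}{h}^{-1}B^n_h(x).
\end{equation*}
Plugging this back and swapping the order of the sums over $j$ and $h$ (so that $h$ runs from $k$ to $n-l$ and $j$ from $0$ to $\min(i-k-l,\,n-l-h)$) gives an expression of the required shape $\sum_h c_{ih}B^n_h(x)$, where $c_{ih}$ equals $\frac{(\alpha+2l+1)_{i-k-l}}{(i-k-l)!}\binom{n}{h}^{-1}$ times an inner sum over $j$.

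The main step, and the only place that needs any care, is to recognize this inner sum as $\binom{n-k-l}{h-k}\,Q_{i-k-l}(n-l-h;\,\alpha+2l,\,\beta+2k,\,n-k-l)$. Comparing with the defining series \eqref{E:H}, with $n\!\to\! i-k-l$, $\alpha\!\to\!\alpha+2l$, $\beta\!\to\!\beta+2k$, $N\!\to\! n-k-l$ and $x\!\to\! n-l-h$, the matter reduces to the Pochhammer identity
\begin{equation*}
\binom{n-k-l}{h-k}\cdot\frac{(-(n-l-h))_j}{(-(n-k-l))_j}=\binom{n-k-l-j}{h-k},
\end{equation*}
which I would verify by writing $(-m)_j=(-1)^j m!/(m-j)!$ for $m=n-l-h$ and $m=n-k-l$ and simplifying factorials. (The automatic truncation of the Hahn series at $j=\min(i-k-l,n-l-h)$, where the factor $(-(n-l-h))_j$ vanishes for larger $j$, matches the upper summation limit obtained after interchange.) Assembling these pieces yields formula \eqref{E:JBHc}. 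No serious obstacle arises; the only subtlety is the bookkeeping of indices and upper summation limits after the interchange of sums.
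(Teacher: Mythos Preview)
Your proof is correct. Every step checks out: the expansion of $x^k(1-x)^{l+j}$ in the Bernstein basis, the interchange of sums, and the Pochhammer identity that identifies the inner sum as the Hahn polynomial (including the automatic truncation at $j=n-l-h$).

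Your route, however, differs from the paper's. The paper does not expand the hypergeometric series of $R^{(\alpha+2l,\beta+2k)}_{i-k-l}$ directly. Instead, it quotes the unconstrained result from \cite{Woz13} giving $R_i^{(\alpha,\beta)}$ in the Bernstein basis $B_h^n$ with coefficients proportional to $Q_i(h;\beta,\alpha,n)$, applies the Hahn symmetry \eqref{E:QSym} to swap the parameter roles and argument, shifts indices $(i,h,n,\alpha,\beta)\to(i-k-l,h-k,n-k-l,\alpha+2l,\beta+2k)$, and then multiplies by $(1-x)^lx^k$ using the elementary identity $(1-x)^lx^k\,B_{h-k}^{\,n-k-l}(x)=\binom{n}{h}^{-1}\binom{n-k-l}{h-k}B_h^n(x)$. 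Thus the paper reduces the constrained case to the known unconstrained one plus a single symmetry, whereas you redo the computation from scratch. Your approach is more self-contained (no external citation, no use of \eqref{E:QSym}), while the paper's is shorter once \cite{Woz13} is granted and highlights the structural fact that the constrained coefficients are simple rescalings of the unconstrained ones.
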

\begin{proof}
According to \cite[Theorem 3.1]{Woz13}, shifted Jacobi polynomials \eqref{E:Jac} have the following Bernstein form:
\begin{equation}\label{E:RBrep0}
R_{i}^{(\alpha,\beta)}(x)  = \sum_{h=0}^{n}a_{ih}^{(n,\alpha,\beta)}B_{h}^{n}(x) \qquad (i=0,1,\ldots,n),
\end{equation}
where
\begin{equation}\label{E:aih}
a_{ih}^{(n,\alpha,\beta)} := (-1)^{i}\frac{(\beta+1)_i}{i!}Q_{i}(h;\,\beta,\,\alpha,\,n).
\end{equation}
Notice that the use of the symmetry property \eqref{E:QSym} in \eqref{E:aih} results in
$$
a_{ih}^{(n,\alpha,\beta)} = \frac{(\alpha+1)_i}{i!}Q_{i}(n-h;\,\alpha,\,\beta,\,n).
$$
Now, suitable substitutions and indices manipulations in \eqref{E:RBrep0} give us
\begin{equation}\label{E:RBrep}
R_{i-k-l}^{(\alpha+2l,\beta+2k)}(x)  = \sum_{h=k}^{n-l}a_{i-k-l,h-k}^{(n-k-l,\alpha+2l,\beta+2k)}B_{h-k}^{n-k-l}(x) \qquad (i=k+l,k+l+1,\ldots,n).
\end{equation}
Then, we multiply both sides of the equation \eqref{E:RBrep} by $(1-x)^lx^k$, use \eqref{E:MJP}, and after some algebra, we get
\begin{equation*}
J_{i,k,l}^{(\alpha,\beta)}(x)  = \sum_{h=k}^{n-l}\binom{n}{h}^{-1}\binom{n-k-l}{h-k}a_{i-k-l,h-k}^{(n-k-l,\alpha+2l,\beta+2k)}B_{h}^{n}(x) \qquad (i=k+l,k+l+1,\ldots,n),
\end{equation*}
which completes the proof.
\end{proof}

In Theorems \ref{T:T1} and \ref{T:T2}, we give two different recurrence relations for the connection coefficients $c_{ih}$ $(i=k+l,k+l+1,\ldots,n;\;h=k,k+1,\ldots,n-l)$. As a result, there are two different methods of computing those coefficients.
Notice that both methods have the complexity $O(n^2)$. Recall that a similar recurrence relation for the unconstrained case
$k=l=0$ was given in \cite[Lemma 4.1]{Woz13}.

\begin{theorem}\label{T:T1}
For a fixed $i$, the connection coefficients $c_{ih}$ $(h=k,k+1,\ldots,n-l)$ given by \eqref{E:JBHc} satisfy the following recurrence relation:
\begin{align}
&c_{i,n-l} = \frac{(\alpha+2l+1)_{i-k-l}}{(i-k-l)!}\binom{n}{l}^{-1},\label{E:c1}\\
&c_{i,n-l-1} = c_{i,n-l}\frac{(n-k-l)(l+1)}{n-l}\left[1-\frac{(k+l-i)(i+k+l+\sigma)}{(k+l-n)(\alpha+2l+1)}\right],\label{E:c2}\\
&c_{ih} = F_{i}(h)c_{i,h+1} + G(h)c_{i,h+2} \qquad (h = n-l-2,n-l-3,\ldots,k),\label{E:c3}
\end{align}
where
\begin{align}
&\sigma := \alpha+\beta+1,\label{E:sigma}\\
&F_{i}(h) := \frac{(n-h)(h+1-k)}{(n-l-h)(h+1)}\left[1-H(h)-\frac{(k+l-i)(i+k+l+\sigma)}{(n+l+\alpha-h)(k-h-1)}\right],\notag\\
&G(h) := \frac{(n-h-1)_2(h+1-k)_2}{(n-l-h-1)_2(h+1)_2}H(h)\notag
\end{align}
with
$$
H(h) := \frac{(n-l-h-1)(h+k+\beta+2)}{(n+l+\alpha-h)(k-h-1)}.
$$

\end{theorem}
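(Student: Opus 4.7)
The plan is to exploit the duality \eqref{dHH} between Hahn and dual Hahn polynomials together with the three-term recurrence \eqref{E:DRec} of the latter. The key observation is that in \eqref{E:JBHc}, with $i$ fixed, varying $h$ varies only the \emph{argument} $n-l-h$ of the Hahn polynomial, and under \eqref{dHH} that argument becomes the \emph{degree} of a dual Hahn polynomial. Hence the sought three-term recurrence in $h$ for the Hahn factor of $c_{ih}$ is nothing but the degree-recurrence \eqref{E:DRec} in disguise.

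First, I would dispose of the two initial values. The identity $Q_m(0;\alpha,\beta,N)=1$, read off directly from the series \eqref{E:H}, together with $\binom{n-k-l}{n-k-l}=1$ and $\binom{n}{n-l}=\binom{n}{l}$, gives \eqref{E:c1}. For $h=n-l-1$ the series \eqref{E:H} truncates at $j\le 1$, yielding
\[
Q_{i-k-l}(1;\,\alpha+2l,\,\beta+2k,\,n-k-l) = 1 - \frac{(i-k-l)(i+k+l+\sigma)}{(n-k-l)(\alpha+2l+1)},
\]
while the binomial ratio $\binom{n}{l}/\binom{n}{l+1} = (l+1)/(n-l)$ combined with $\binom{n-k-l}{n-k-l-1}=n-k-l$ supplies the prefactor $(n-k-l)(l+1)/(n-l)$ appearing in \eqref{E:c2}, after rewriting $i-k-l=-(k+l-i)$ and $n-k-l=-(k+l-n)$.

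For the main recurrence \eqref{E:c3}, I would apply \eqref{E:DRec} with parameters $\alpha\mapsto\alpha+2l$, $\beta\mapsto\beta+2k$, $N\mapsto n-k-l$, and with $x=i-k-l$ so that $\lambda(x)=(i-k-l)(i+k+l+\sigma)$, at degree $\nu := n-l-h-1$. This expresses $R_{\nu+1}=R_{n-l-h}$ as a linear combination of $R_{\nu}=R_{n-l-h-1}$ and $R_{\nu-1}=R_{n-l-h-2}$, which via \eqref{dHH} are exactly the Hahn factors occurring in $c_{ih}$, $c_{i,h+1}$, $c_{i,h+2}$, respectively. Dividing through by $B_{\nu}$ one then identifies $H(h)=-D_{\nu}/B_{\nu}$ and recognizes the bracket in $F_i(h)$ as $(\lambda(x)+B_{\nu}+D_{\nu})/B_{\nu}$.

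The final, and most tedious, step is to convert this relation on the Hahn factors into the stated relation on the full coefficients $c_{ih}$ by multiplying through by the $h$-dependent prefactor $\binom{n}{h}^{-1}\binom{n-k-l}{h-k}$ (the $i$-dependent factor in \eqref{E:JBHc} cancels). The binomial ratios
\[
\frac{\binom{n}{h+1}\binom{n-k-l}{h-k}}{\binom{n}{h}\binom{n-k-l}{h-k+1}} = \frac{(n-h)(h+1-k)}{(h+1)(n-l-h)} \quad\text{and}\quad \frac{\binom{n}{h+2}\binom{n-k-l}{h-k}}{\binom{n}{h}\binom{n-k-l}{h-k+2}} = \frac{(n-h-1)_2(h+1-k)_2}{(h+1)_2(n-l-h-1)_2}
\]
then produce precisely the leading factors of $F_i(h)$ and $G(h)$. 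This bookkeeping is the only nontrivial obstacle, and it is purely algebraic; no new idea is needed beyond \eqref{dHH} and \eqref{E:DRec}.
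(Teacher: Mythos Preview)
Your proposal is correct and follows essentially the same route as the paper: rewrite $c_{ih}$ via \eqref{dHH} so that the Hahn factor becomes $R_{n-l-h}(\lambda(i-k-l);\alpha+2l,\beta+2k,n-k-l)$, read off the initial values from $R_0=1$ and $R_1$, and obtain \eqref{E:c3} from the three-term recurrence \eqref{E:DRec} after absorbing the binomial prefactors. Your write-up simply makes explicit the ``some algebra'' the paper leaves implicit, including the identifications $H(h)=-D_\nu/B_\nu$ and the binomial ratios producing the leading factors of $F_i(h)$ and $G(h)$.
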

\begin{proof}
According to Lemma \ref{L:JB}, the coefficients $c_{ih}$ can be represented using Hahn polynomials. Since Hahn and dual Hahn polynomials are related (see \eqref{dHH}), we obtain
\begin{equation}\label{E:JBdHc}
c_{ih} = \frac{(\alpha+2l+1)_{i-k-l}}{(i-k-l)!}\binom{n}{h}^{-1}\binom{n-k-l}{h-k}R_{n-l-h}(\lambda(i-k-l);\, \alpha+2l,\, \beta+2k,\, n-k-l)
\end{equation}
(cf.~\eqref{E:JBHc}). Then, it can be checked that \eqref{E:c1} and \eqref{E:c2} follow from \eqref{E:JBdHc} for $h=n-l, n-l-1$, respectively (see \eqref{E:Rn}). Finally, the application of the recurrence relation \eqref{E:DRec} to \eqref{E:JBdHc}, along with some algebra, gives \eqref{E:c3}.
\end{proof}

\begin{theorem}\label{T:T2}
For a fixed $h$, the connection coefficients $c_{ih}$ $(i=k+l,k+l+1,\ldots,n)$ given by \eqref{E:JBHc} satisfy the following recurrence relation:
\begin{align}
&c_{k+l,h} = \binom{n}{h}^{-1}\binom{n-k-l}{h-k},\label{E:c4}\\
&c_{k+l+1,h} = c_{k+l,h}\left[\alpha+2l+1-\frac{(\sigma+2k+2l+1)(l+h-n)}{k+l-n}\right],\label{E:c5}\\
&c_{ih} = K_{h}(i)c_{i-1,h} + L(i)c_{i-2,h} \qquad (i=k+l+2,k+l+3,\ldots,n),\label{E:c6}
\end{align}
where $\sigma$ is given by \eqref{E:sigma},
\begin{align*}
&K_{h}(i) := \frac{\alpha+l+i-k}{i-k-l}\left[1-M(i)-\frac{(l+h-n)(2i+\alpha+\beta-1)_2}{(i+k+l+\alpha+\beta)(\alpha+l+i-k)(i-n-1)}\right],\\
&L(i) := \frac{(\alpha+l+i-k-1)_2}{(i-k-l-1)_2}M(i)
\end{align*}
with
$$
M(i) := \frac{(i-k-l-1)(n+i+\alpha+\beta)(i+k+\beta-l-1)(2i+\alpha+\beta)}{(2i+\alpha+\beta-2)(i+k+l+\alpha+\beta)(i+l+\alpha-k)(i-n-1)}
$$
(cf.~Theorem \ref{T:T1}).
\end{theorem}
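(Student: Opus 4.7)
The plan mirrors the proof of Theorem \ref{T:T1}, but since we now fix $h$ and vary $i$, we need a recurrence in the \emph{degree} of the involved orthogonal polynomial. Accordingly, I would keep the Hahn representation \eqref{E:JBHc} (rather than convert it to dual Hahn as in Theorem \ref{T:T1}) and apply the three-term recurrence \eqref{E:HRec}. Writing
\begin{equation*}
c_{ih} = P_{m}\,Q_{m}(n-l-h;\,\alpha+2l,\,\beta+2k,\,n-k-l), \qquad m := i-k-l,
\end{equation*}
with
\begin{equation*}
P_{m} := \frac{(\alpha+2l+1)_{m}}{m!}\binom{n}{h}^{-1}\binom{n-k-l}{h-k},
\end{equation*}
concentrates the whole $i$-dependence of $c_{ih}$ into the prefactor $P_m$ and into the degree of $Q_m$.

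The two initial values \eqref{E:c4} and \eqref{E:c5} follow by direct substitution. For $i=k+l$ we have $m=0$ and $Q_0\equiv 1$, which yields \eqref{E:c4}. For $i=k+l+1$ the sum in \eqref{E:H} reduces to two terms; combining this with $P_1/P_0 = \alpha+2l+1$ and writing $\alpha+\beta+1$ as $\sigma$ gives \eqref{E:c5}.

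For the recurrence \eqref{E:c6}, I would apply \eqref{E:HRec} with degree $m$, argument $x:=n-l-h$, and parameters $\alpha+2l$, $\beta+2k$, $n-k-l$, solve for $Q_{m+1}$, multiply by $P_{m+1}$, and finally shift $i\mapsto i-1$ (so that $m\mapsto m-1 = i-1-k-l$). The outcome is
\begin{equation*}
c_{ih} = \frac{P_{m}}{P_{m-1}}\cdot\frac{A_{m-1}+C_{m-1}-(n-l-h)}{A_{m-1}}\,c_{i-1,h} \;-\; \frac{P_{m}}{P_{m-2}}\cdot\frac{C_{m-1}}{A_{m-1}}\,c_{i-2,h},
\end{equation*}
together with the elementary identities
\begin{equation*}
\frac{P_{m}}{P_{m-1}} = \frac{\alpha+l+i-k}{i-k-l}, \qquad \frac{P_{m}}{P_{m-2}} = \frac{(\alpha+l+i-k-1)_{2}}{(i-k-l-1)_{2}}.
\end{equation*}
Comparison with \eqref{E:c6} then forces the identification $M(i) = -C_{m-1}/A_{m-1}$.

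The main obstacle is the purely algebraic verification that, once $A_{m-1}$ and $C_{m-1}$ from Section \ref{Sec:PrelimHahn} are expanded, $-C_{m-1}/A_{m-1}$ coincides with the stated $M(i)$ and that $-(n-l-h)/A_{m-1}$ produces the third summand inside the bracket defining $K_h(i)$. This amounts to using identities such as $2(m-1)+\alpha+2l+\beta+2k = 2i+\sigma-2$ to convert the shifted parameters back into the variables $i,k,l,\alpha,\beta,\sigma$, and then cancelling common linear factors in the four-factor numerators and denominators. The structural reason why $L(i)$ does not depend on $h$ is already visible at this stage: the only $h$-dependent summand in \eqref{E:HRec} is $-x\,Q_m(x)$ with $x = n-l-h$, and this contribution is absorbed entirely into $K_h(i)$.
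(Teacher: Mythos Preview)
Your proposal is correct and follows essentially the same approach as the paper's own proof: keep the Hahn representation \eqref{E:JBHc}, read off \eqref{E:c4}--\eqref{E:c5} from $Q_0$ and $Q_1$, and derive \eqref{E:c6} by applying the three-term recurrence \eqref{E:HRec} in the degree variable. You have in fact supplied more detail than the paper (which merely says ``some manipulations lead us to \eqref{E:c6}''), and your identifications $M(i)=-C_{m-1}/A_{m-1}$ and $-x/A_{m-1}$ for the $h$-dependent summand in $K_h(i)$ check out once the shifted parameters $\tilde\alpha=\alpha+2l$, $\tilde\beta=\beta+2k$, $N=n-k-l$, $m=i-k-l$ are substituted.
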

\begin{proof}
Recall that the coefficients $c_{ih}$ can be represented using Hahn polynomials (see Lemma \ref{L:JB}). Consequently,
\eqref{E:c4} and \eqref{E:c5} are obtained from \eqref{E:JBHc} by setting $i=k+l, k+l+1$, respectively (see \eqref{E:H}). Finally,
we apply the recurrence relation \eqref{E:HRec} to \eqref{E:JBHc}, and then some manipulations lead us to \eqref{E:c6}.
\end{proof}

\section{Modified Jacobi form of Bernstein polynomials}\label{Sec:BtJ}

First of all, we notice that the connection coefficients in \eqref{E:BdJDef} can be represented using Hahn polynomials \eqref{E:H}. The following lemma is a generalization of the result from \cite{R98} (see also \cite{Woz13}), where only the unconstrained case $k=l=0$
(see \eqref{E:MJP}) was considered. More precisely, Bernstein polynomials \eqref{E:Bern} were represented in the \textit{monic} shifted Jacobi basis (cf.~\eqref{E:Jac}).

\begin{lemma}\label{L:BJ}
Bernstein polynomials \eqref{E:Bern2} have the following representation in the modified Jacobi basis \eqref{E:MJP}:
\begin{equation}\label{E:BJRep}
B_h^n(x) = \sum_{i=k+l}^{n} d_{hi}J_{i,k,l}^{(\alpha,\beta)}(x) \qquad (h = k,k+1,\ldots,n-l),
\end{equation}
where
\begin{equation}\label{E:BJd}
d_{hi} := z_{hi}w_{hi}
\end{equation}
with
\begin{align}
&z_{hi} := \binom{n}{h}\frac{(2i+\sigma)(k+l-n)_{i-k-l}(\alpha+2l+1)_{n-l-h}(\beta+2k+1)_{h-k}}{(\alpha+2l+1)_{i-k-l}(i+k+l+\sigma)_{n+1-k-l}},\label{E:mu}\\[1ex]
&w_{hi} := Q_{i-k-l}(h-k;\, \beta+2k,\, \alpha+2l,\, n-k-l),\label{E:q}
\end{align}
and $\sigma$ as defined in \eqref{E:sigma}.
\end{lemma}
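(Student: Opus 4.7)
My plan mirrors the proof strategy used for Lemma \ref{L:JB}: reduce the claim to the unconstrained case (already handled in \cite{R98,Woz13}) by factoring out $x^k(1-x)^l$ and relabeling indices. The bridge between Bernstein polynomials of different degrees that makes this work is the elementary identity
\[
x^k(1-x)^l\, B_{h-k}^{\,n-k-l}(x) \;=\; \binom{n-k-l}{h-k}\binom{n}{h}^{-1} B_h^n(x),
\]
which follows at once from \eqref{E:Bern}, together with the factorization $J_{i,k,l}^{(\alpha,\beta)}(x)=(1-x)^l x^k R_{i-k-l}^{(\alpha+2l,\beta+2k)}(x)$ from \eqref{E:MJP}.

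First, I would invoke the known representation (see \cite[Thm.~3.2]{Woz13} or \cite{R98}) of Bernstein polynomials in the shifted Jacobi basis, that is, for each $n' \in \mathbb{N}$, parameters $\alpha',\beta'>-1$, and $0\le h'\le n'$,
\[
B_{h'}^{\,n'}(x) \;=\; \sum_{i'=0}^{n'} \tilde d^{\,(n',\alpha',\beta')}_{h'i'}\, R_{i'}^{(\alpha',\beta')}(x),
\]
with coefficients $\tilde d^{\,(n',\alpha',\beta')}_{h'i'}$ that involve the Hahn polynomial $Q_{i'}(h';\,\beta',\,\alpha',\,n')$ together with a product of Pochhammer symbols. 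I would then apply the substitution $n'=n-k-l$, $\alpha'=\alpha+2l$, $\beta'=\beta+2k$, $h'=h-k$, $i'=i-k-l$ and shift the summation range to $i=k+l,k+l+1,\ldots,n$.

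Next I would multiply both sides of the resulting expansion by $x^k(1-x)^l$. On the right-hand side, this turns each $R_{i-k-l}^{(\alpha+2l,\beta+2k)}(x)$ into exactly $J_{i,k,l}^{(\alpha,\beta)}(x)$ by \eqref{E:MJP}; on the left-hand side, the displayed identity above converts $x^k(1-x)^l B_{h-k}^{n-k-l}(x)$ into a scalar multiple of $B_h^n(x)$. Isolating $B_h^n(x)$ then yields an expansion of the form \eqref{E:BJRep} with
\[
d_{hi} \;=\; \binom{n}{h}\binom{n-k-l}{h-k}^{-1} \tilde d^{\,(n-k-l,\alpha+2l,\beta+2k)}_{h-k,\,i-k-l}.
\]

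The remaining and main obstacle is purely algebraic: checking that this product simplifies to $z_{hi}w_{hi}$ as given in \eqref{E:mu}--\eqref{E:q}. The Hahn factor matches $w_{hi}$ immediately. For the prefactor, I would carefully rewrite the substituted Pochhammer symbols, using $(-N')_{j}=(-1)^{j}(N'-j+1)_{j}$ to generate the term $(k+l-n)_{i-k-l}$, and canceling the factor $\binom{n-k-l}{h-k}^{-1}$ against contributions from $(\alpha'+1)_{n'-h'}$ and $(\beta'+1)_{h'}$ appearing in the known expression. No new ideas are required here; the only risk is bookkeeping errors in the Pochhammer manipulations, so I would double-check the $i=k+l$ and $h=k$ boundary cases as a sanity check before concluding.
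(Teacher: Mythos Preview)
Your proposal is correct and follows essentially the same route as the paper: start from the unconstrained expansion \cite[Theorem~3.2]{Woz13}, substitute $(n,h,i,\alpha,\beta)\to(n-k-l,\,h-k,\,i-k-l,\,\alpha+2l,\,\beta+2k)$, multiply by $x^k(1-x)^l$, and identify $J_{i,k,l}^{(\alpha,\beta)}$ via \eqref{E:MJP}. One small bookkeeping note: the factor $\binom{n-k-l}{h-k}^{-1}$ cancels directly against the $\binom{n'}{h'}$ already present in the known coefficient $b_{h'i'}^{(n',\alpha',\beta')}$, not against the Pochhammer factors, so the simplification to $z_{hi}w_{hi}$ is even more immediate than you anticipate.
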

\begin{proof}
First, we recall the shifted Jacobi form of Bernstein polynomials \eqref{E:Bern} (see \cite[Theorem 3.2]{Woz13}),
\begin{equation}\label{E:BR0}
B_{h}^{n}(x) = \sum_{i=0}^{n}b_{hi}^{(n,\alpha,\beta)}R_{i}^{(\alpha,\beta)}(x) \qquad (h=0,1,\ldots,n),
\end{equation}
where
\begin{equation}\label{E:bhi}
b_{hi}^{(n,\alpha,\beta)} := \binom{n}{h}\frac{(2i+\sigma)(-n)_i(\alpha+1)_{n-h}(\beta+1)_h}{(\alpha+1)_i(i+\sigma)_{n+1}}Q_{i}(h;\,\beta,\,\alpha,\,n).
\end{equation}
Then, appropriate substitutions and indices manipulations in \eqref{E:BR0} lead us to
\begin{equation}\label{E:BRpf}
B_{h-k}^{n-k-l}(x) = \sum_{i=k+l}^{n}b_{h-k,i-k-l}^{(n-k-l,\alpha+2l,\beta+2k)}R_{i-k-l}^{(\alpha+2l,\beta+2k)}(x) \qquad (h=k,k+1,\ldots,n-l).
\end{equation}
Next, we multiply both sides of the equation \eqref{E:BRpf} by
$\binom{n}{h}\binom{n-k-l}{h-k}^{-1}(1-x)^lx^k$,
\begin{align}\label{E:BRpf2}
B_{h}^{n}(x) = &\sum_{i=k+l}^{n}\binom{n}{h}\binom{n-k-l}{h-k}^{-1}b_{h-k,i-k-l}^{(n-k-l,\alpha+2l,\beta+2k)}(1-x)^lx^k\nonumber\\
&\times R_{i-k-l}^{(\alpha+2l,\beta+2k)}(x) \qquad (h=k,k+1,\ldots,n-l),
\end{align}
and finally obtain the equations \eqref{E:BJRep}--\eqref{E:q} by substituting \eqref{E:MJP} into \eqref{E:BRpf2}.
\end{proof}

\begin{remark}
Notice that the formula \eqref{E:BJd} relating the coefficients $d_{hi}$ with Hahn polynomials is a bit more complicated than the analogous formula from the previous section (cf.~\eqref{E:JBHc}). Therefore, our goal is to first compute separately the quantities $z_{hi}$ and $w_{hi}$ using recurrence relations, and then to obtain $d_{hi}$ using \eqref{E:BJd}.
\end{remark}

In Theorems \ref{T:T3} and \ref{T:T4}, we give two different recurrence relations for each quantities $z_{hi}$ and $w_{hi}$.
Consequently, there are two different methods of computing the connection coefficients $d_{hi}$ $(h = k,k+1,\ldots,n-l;\;i=k+l,k+l+1,\ldots,n)$.
Observe that both methods have the complexity $O(n^2)$. Recall that the unconstrained case $k=l=0$ was solved in \cite[Lemma 4.2]{Woz13} using a similar
approach.

\begin{theorem}\label{T:T3}
For a fixed $h$, the quantities $z_{hi}$ $(i=k+l,k+l+1,\ldots,n)$ given by \eqref{E:mu} satisfy the following recurrence relation:
\begin{align}
&z_{h,k+l} = \binom{n}{h}\frac{(\alpha+2l+1)_{n-l-h}(\beta+2k+1)_{h-k}}{(2k+2l+\sigma+1)_{n-k-l}},\label{E:mu1}\\[1ex]
&z_{hi} = \frac{(2i+\sigma)(i+k+l+\alpha+\beta)(i-n-1)}{(\alpha+l+i-k)(i+n+\sigma)(2i+\alpha+\beta-1)}z_{h,i-1}
\qquad (i = k+l+1,k+l+2,\ldots,n);\label{E:mu2}
\end{align}
and the quantities $w_{hi}$ $(i=k+l,k+l+1,\ldots,n)$ given by \eqref{E:q} satisfy
\begin{align}
&w_{h,k+l} = 1, \quad w_{h,k+l+1} = 1+\frac{(h-k)(2k+2l+\sigma+1)}{(k+l-n)(\beta+2k+1)},\label{E:q1}\\[1ex]
&w_{hi}  = P_{h}(i)w_{h,i-1} + S(i)w_{h,i-2}\qquad (i = k+l+2,k+l+3,\ldots,n),\label{E:qrec}
\end{align}
where $\sigma$ is defined by \eqref{E:sigma},
\begin{align}
&P_{h}(i) := 1-S(i)-\frac{(k-h)(2i+\alpha+\beta-1)_2}{(i+k+l+\alpha+\beta)(i+k+\beta-l)(i-n-1)},\notag\\[1ex]
&S(i) := \frac{(i-k-l-1)(i+\beta+\alpha+n)(i+l+\alpha-k-1)(2i+\beta+\alpha)}{(2i+\beta+\alpha-2)(i+k+l+\beta+\alpha)(i+k+\beta-l)(i-n-1)}.\notag
\end{align}
\end{theorem}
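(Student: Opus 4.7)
The plan is to establish both recurrences by direct computation from the explicit formulas \eqref{E:mu} and \eqref{E:q} in Lemma~\ref{L:BJ}, without introducing any intermediate identities.

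For $z_{hi}$, the only $i$-dependent factors are $(2i+\sigma)$, the Pochhammer $(k+l-n)_{i-k-l}$, the Pochhammer $(\alpha+2l+1)_{i-k-l}$, and $(i+k+l+\sigma)_{n+1-k-l}$. The initial value \eqref{E:mu1} follows by setting $i=k+l$ and cancelling the $(2k+2l+\sigma)$ factor against the first factor of $(2k+2l+\sigma)_{n+1-k-l}$. For the ratio $z_{hi}/z_{h,i-1}$, the first three $i$-dependent factors contribute respectively $(2i+\sigma)/(2i+\sigma-2)$, $(i-n-1)$, and $1/(\alpha+l+i-k)$. The last factor contributes $(i+k+l+\sigma-1)/(i+n+\sigma)$ via the Gamma-function identity $(x)_m/(x-1)_m = (x+m-1)/(x-1)$ applied with $x = i+k+l+\sigma$ and $m = n+1-k-l$. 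Multiplying these ratios and rewriting $\sigma=\alpha+\beta+1$ produces \eqref{E:mu2} exactly.

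For $w_{hi}$, I invoke the Hahn three-term recurrence \eqref{E:HRec} after the substitutions $\alpha\mapsto\beta+2k$, $\beta\mapsto\alpha+2l$, $N\mapsto n-k-l$, with the evaluation point fixed at $x=h-k$ and the degree set to $n\mapsto i-k-l-1$. This directly yields a recurrence of the form
$$A_{i-k-l-1}\,w_{hi} \;=\; \bigl(A_{i-k-l-1}+C_{i-k-l-1}-(h-k)\bigr)w_{h,i-1} \;-\; C_{i-k-l-1}\,w_{h,i-2},$$
which after division matches \eqref{E:qrec} with $S(i) = -C_{i-k-l-1}/A_{i-k-l-1}$ and $P_h(i) = 1 + (C_{i-k-l-1}-(h-k))/A_{i-k-l-1}$. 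The two initial values \eqref{E:q1} come from $Q_0\equiv 1$ and from reading off $Q_1$ from the defining series \eqref{E:H}, noting that $1/(-N) = -1/(n-k-l) = 1/(k+l-n)$.

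The main obstacle is the bookkeeping identifying the raw expressions $-C_{i-k-l-1}/A_{i-k-l-1}$ and $1+(C_{i-k-l-1}-(h-k))/A_{i-k-l-1}$ with the stated $S(i)$ and $P_h(i)$. Two subtleties arise: the factor $N-m = n-i+1 = -(i-n-1)$ flips a sign when absorbed into the denominators of the stated formulas, and one must recognize the product $(2i+\alpha+\beta-1)(2i+\alpha+\beta)$ as the Pochhammer symbol $(2i+\alpha+\beta-1)_2$ in $P_h(i)$. Once these substitutions are carried out and $\sigma$ is consistently replaced by $\alpha+\beta+1$, the identification is purely mechanical, requiring no creative step.
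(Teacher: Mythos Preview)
Your proof is correct and follows essentially the same approach as the paper's own proof: the initial value \eqref{E:mu1} by direct substitution, the ratio \eqref{E:mu2} by comparing the $i$-dependent Pochhammer factors (what the paper calls ``induction''), and the recurrence \eqref{E:qrec} by applying the Hahn three-term relation \eqref{E:HRec} to \eqref{E:q} with the indicated parameter substitutions, while \eqref{E:q1} comes from $Q_0\equiv 1$ and the explicit $Q_1$. Your write-up simply spells out the algebraic bookkeeping that the paper leaves implicit.
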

\begin{proof}
The formula \eqref{E:mu1} follows from \eqref{E:mu} for $i=k+l$. The relation \eqref{E:mu2} can be easily proved by induction.
By setting $i = k+l, k+l+1$ in \eqref{E:q} (see also \eqref{E:H}), we obtain \eqref{E:q1}.
Finally, the application of the recurrence relation \eqref{E:HRec} to \eqref{E:q}, combined with some algebraic manipulation,
gives \eqref{E:qrec}.
\end{proof}

\begin{theorem}\label{T:T4}
For a fixed $i$, the quantities $z_{hi}$ $(h=k,k+1,\ldots,n-l)$ given by \eqref{E:mu} satisfy the following recurrence relation:
\begin{align}
&z_{ki} = \binom{n}{k}\frac{(2i+\sigma)(\alpha+l+i+1-k)_{n-i}(k+l-n)_{i-k-l}}{(i+k+l+\sigma)_{n-k-l+1}},\label{E:mu3}\\[1ex]
&z_{hi} = \frac{(n+1-h)(\beta+k+h)}{h(\alpha+l+n+1-h)}z_{h-1,i}
\qquad (h=k+1,k+2,\ldots,n-l);\label{E:mu4}
\end{align}
and the quantities $w_{hi}$ $(h=k,k+1,\ldots,n-l)$ given by \eqref{E:q} satisfy
\begin{align}
&w_{ki} = 1, \quad w_{k+1,i} = 1+\frac{(i-k-l)(i+k+l+\sigma)}{(\beta+2k+1)(k+l-n)},\label{E:q2}\\[1ex]
&w_{hi}  = T_{i}(h)w_{h-1,i} + V(h)w_{h-2,i}\qquad (h=k+2,k+3,\ldots,n-l),\label{E:qrec2}
\end{align}
where $\sigma$ is defined by \eqref{E:sigma},
\begin{align}
&T_{i}(h) := 1 - V(h) - \frac{(k+l-i)(i+k+l+\sigma)}{(h+k+\beta)(h+l-n-1)} ,\notag\\[1ex]
&V(h) := \frac{(h-k-1)(l+n+\alpha+2-h)}{(h+k+\beta)(h+l-n-1)}\notag
\end{align}
(cf.~Theorem \ref{T:T3}).
\end{theorem}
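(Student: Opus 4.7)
The plan is to follow the same template as Theorem \ref{T:T3}, with the roles of $h$ and $i$ interchanged: we now fix $i$ and derive recurrences in $h$. For the quantities $z_{hi}$, only three factors in \eqref{E:mu} depend on $h$, namely $\binom{n}{h}$, $(\alpha+2l+1)_{n-l-h}$, and $(\beta+2k+1)_{h-k}$. The initial value \eqref{E:mu3} is obtained by setting $h=k$ in \eqref{E:mu}: the factor $(\beta+2k+1)_0$ equals $1$, and the ratio $(\alpha+2l+1)_{n-l-k}/(\alpha+2l+1)_{i-k-l}$ telescopes to $(\alpha+l+i+1-k)_{n-i}$. The recurrence \eqref{E:mu4} then follows by computing $z_{hi}/z_{h-1,i}$: the binomial contributes $(n-h+1)/h$, the factor $(\alpha+2l+1)_{n-l-h}$ contributes $1/(\alpha+l+n+1-h)$, and $(\beta+2k+1)_{h-k}$ contributes $\beta+k+h$, whose product is exactly the stated ratio.

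For the quantities $w_{hi}$, the obstacle is that \eqref{E:q} expresses $w_{hi}$ as a Hahn polynomial whose polynomial index is $i-k-l$ while $h-k$ appears only as its argument; the Hahn three-term relation \eqref{E:HRec} therefore yields a recurrence in $i$, not in $h$. The key step is to invoke \eqref{dHH} to rewrite
\[
w_{hi} = Q_{i-k-l}(h-k;\,\beta+2k,\,\alpha+2l,\,n-k-l) = R_{h-k}(\lambda(i-k-l);\,\beta+2k,\,\alpha+2l,\,n-k-l),
\]
where $\lambda(i-k-l)=(i-k-l)(i+k+l+\sigma)$ after inserting the modified parameters into $\lambda(x)=x(x+\alpha'+\beta'+1)$. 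In this dual Hahn form, the polynomial index is $h-k$, so the dual Hahn recurrence \eqref{E:DRec} becomes a three-term recurrence in $h$ with $i$ fixed. Applying \eqref{E:DRec} with $(\alpha,\beta,N)\mapsto(\beta+2k,\alpha+2l,n-k-l)$ and solving for $R_{h-k}$ in terms of $R_{h-k-1}$ and $R_{h-k-2}$ yields \eqref{E:qrec2}. The initial data $w_{ki}=1$ and $w_{k+1,i}$ in \eqref{E:q2} come directly from the defining sum \eqref{E:H} for polynomial degrees zero and one.

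The main technical work is purely algebraic: simplifying the explicit forms of $B_{h-k-1}$ and $D_{h-k-1}$ into the stated $T_i(h)$ and $V(h)$. A convenient way to organise the bookkeeping is to first isolate $V(h)=-D_{h-k-1}/B_{h-k-1}$, then note that $T_i(h)=1-V(h)+\lambda(i-k-l)/B_{h-k-1}$, and finally rewrite $\lambda(i-k-l)=-(k+l-i)(i+k+l+\sigma)$ so the sign convention matches the statement. Since the derivation mirrors that of Theorem \ref{T:T3} with the Hahn and dual Hahn roles swapped, no new idea beyond \eqref{dHH} is needed.
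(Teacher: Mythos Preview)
Your proposal is correct and follows essentially the same route as the paper: derive \eqref{E:mu3}--\eqref{E:mu4} directly from the explicit formula \eqref{E:mu}, then use the duality \eqref{dHH} to recast $w_{hi}$ as the dual Hahn polynomial $R_{h-k}(\lambda(i-k-l);\beta+2k,\alpha+2l,n-k-l)$ and read off the recurrence in $h$ from \eqref{E:DRec}. One small slip: for the initial values $w_{ki}$ and $w_{k+1,i}$ you cite the Hahn sum \eqref{E:H}, but after passing to the dual form the relevant defining sum for ``degrees zero and one'' is \eqref{E:Rn}; the paper accordingly appeals to \eqref{E:Rn} here (though, by duality, either computation yields the same values).
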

\begin{proof}
The formula \eqref{E:mu3} is obtained from \eqref{E:mu} for $h=k$. The relation \eqref{E:mu4} can be easily proved by induction.
Since Hahn and dual Hahn polynomials are related (see \eqref{dHH}), we have
\begin{equation}\label{E:qdualH}
w_{hi} = R_{h-k}(\lambda(i-k-l);\, \beta+2k,\, \alpha+2l,\, n-k-l)
\end{equation}
(cf.~\eqref{E:q}). Now, it can be checked that the formulas \eqref{E:q2} follow from \eqref{E:qdualH} for $h=k,k+1$ (see \eqref{E:Rn}).
The recurrence relation \eqref{E:qrec2} is obtained, after some algebra, from \eqref{E:qdualH} and \eqref{E:DRec}.
\end{proof}

As a bonus, we give a relation between the connection coefficients $c_{ih}$ (see \eqref{E:JBHc}) and $d_{hi}$ (see \eqref{E:BJd}).
This is a generalization of \cite[Remark 3.4]{Woz13}, where only the case of $k=l=0$ was considered.

\begin{proposition}\label{T:T5}
The connection coefficients $c_{ih}$ (see \eqref{E:JBHc}) and $d_{hi}$ (see \eqref{E:BJd}) are related in the following way:
\begin{equation*}
c_{ih} = u_{ih}d_{hi} \qquad (i = k+l,k+l+1,\ldots,n;\;h=k,k+1,\ldots,n-l),
\end{equation*}
where, for a fixed $i$,
\begin{align}
&u_{ik} := (-1)^{i-k-l}\binom{n}{k}^{-2}\frac{(i+k+l+\sigma)_{n+1-k-l}(\beta+2k+1)_{i-k-l}}{(2i+\sigma)(i-k-l)!
(k+l-n)_{i-k-l}(\alpha+l+i+1-k)_{n-i}},\label{E:verth1}\\[1ex]
&u_{ih} :=  -u_{i,h-1}\frac{h^2(l+h-n-1)(n+l+\alpha+1-h)}{(h-n-1)^2(h-k)(h+k+\beta)} \qquad (h=k+1,k+2,\ldots,n-l);\label{E:rho1}
\end{align}
alternatively, for a fixed $h$,
\begin{align}
&u_{k+l,h} := \binom{n-k-l}{h-k}\binom{n}{h}^{-2}\frac{(2k+2l+\sigma+1)_{n-k-l}}{(\alpha+2l+1)_{n-l-h}(\beta+2k+1)_{h-k}},\label{E:verth2}\\[1ex]
&u_{ih} := -u_{i-1,h}\frac{(i+l+\alpha-k)(2i+\alpha+\beta-1)(n+i+\sigma)(i+k+\beta-l)}{(2i+\sigma)(i-k-l)(i-n-1)(i+k+l+\alpha+\beta)}\nonumber\\
& \hspace*{8.75cm} (i = k+l+1,k+l+2,\ldots,n)\label{E:rho2}
\end{align}
with $\sigma$ as defined in \eqref{E:sigma}.
\end{proposition}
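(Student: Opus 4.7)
The plan is to eliminate the Hahn polynomials from the ratio $c_{ih}/d_{hi}$ by invoking the symmetry property \eqref{E:QSym}, obtain an explicit closed form for $u_{ih}$, and then read off the two base cases and the two recurrences.

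First, I would combine Lemma~\ref{L:JB} with Lemma~\ref{L:BJ}. By \eqref{E:JBHc} and \eqref{E:BJd}--\eqref{E:q}, the ratio $c_{ih}/d_{hi}$ is a product of explicit Pochhammer-type factors times
\[
\frac{Q_{i-k-l}(n-l-h;\,\alpha+2l,\,\beta+2k,\,n-k-l)}{Q_{i-k-l}(h-k;\,\beta+2k,\,\alpha+2l,\,n-k-l)}.
\]
The crucial observation is that with $N:=n-k-l$ and $x:=n-l-h$ we have $N-x=h-k$, so the symmetry \eqref{E:QSym} (applied with $\alpha\leftrightarrow\alpha+2l$, $\beta\leftrightarrow\beta+2k$) gives
\[
Q_{i-k-l}(n-l-h;\,\alpha+2l,\,\beta+2k,\,n-k-l)=(-1)^{i-k-l}\frac{(\beta+2k+1)_{i-k-l}}{(\alpha+2l+1)_{i-k-l}}\,Q_{i-k-l}(h-k;\,\beta+2k,\,\alpha+2l,\,n-k-l),
\]
so the Hahn polynomials cancel. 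After collecting the remaining factors one obtains an explicit, Hahn-free formula
\[
u_{ih}=\frac{(-1)^{i-k-l}\binom{n-k-l}{h-k}(\alpha+2l+1)_{i-k-l}(\beta+2k+1)_{i-k-l}(i+k+l+\sigma)_{n+1-k-l}}{\binom{n}{h}^{2}(2i+\sigma)(i-k-l)!(k+l-n)_{i-k-l}(\alpha+2l+1)_{n-l-h}(\beta+2k+1)_{h-k}}.
\]

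The base cases then follow by direct specialization. Setting $h=k$ collapses $\binom{n-k-l}{h-k}=1$ and $(\beta+2k+1)_{h-k}=1$, and using $(\alpha+2l+1)_{n-l-k}/(\alpha+2l+1)_{i-k-l}=(\alpha+l+i+1-k)_{n-i}$ reduces the expression to \eqref{E:verth1}. Setting $i=k+l$ kills $(\alpha+2l+1)_{i-k-l}$, $(\beta+2k+1)_{i-k-l}$, $(k+l-n)_{i-k-l}$ and $(-1)^{i-k-l}$, and a single cancellation of $(2i+\sigma)$ against the leading factor of $(i+k+l+\sigma)_{n+1-k-l}$ yields \eqref{E:verth2}.

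Finally, the recurrences \eqref{E:rho1} and \eqref{E:rho2} are obtained by forming the ratios $u_{ih}/u_{i,h-1}$ and $u_{ih}/u_{i-1,h}$ from the closed form above. Only the $h$-dependent (respectively $i$-dependent) factors survive; standard Pochhammer identities such as $(a)_{m+1}/(a)_m=a+m$ and $\binom{N}{m}/\binom{N}{m-1}=(N-m+1)/m$ turn each ratio into a rational function of the shifted indices. I expect the main nuisance, and essentially the only obstacle, to be the sign and index bookkeeping: showing that $-(l+h-n-1)=n-l-h+1$ and $(h-n-1)^{2}=(n-h+1)^{2}$ reconcile the signs in \eqref{E:rho1}, and analogously that the factor $(2i+\alpha+\beta-1)/(2i+\sigma)$ together with $(n+i+\sigma)_{\ldots}/(i+k+l+\sigma)_{\ldots}$ produces the stated form of \eqref{E:rho2}. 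All of this is routine algebra once the cancellation of the Hahn polynomials has been carried out.
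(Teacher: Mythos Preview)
Your proposal is correct and follows essentially the same route as the paper: apply the Hahn symmetry \eqref{E:QSym} so that the Hahn polynomial in \eqref{E:JBHc} matches the one in \eqref{E:q}, read off the explicit closed form for $u_{ih}$ (which coincides with the paper's \eqref{E:verth3}), specialize at $h=k$ and $i=k+l$ for the base cases, and verify the two recurrences from the closed form (the paper phrases this last step as ``induction'', which amounts to the same ratio computation you outline).
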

\begin{proof}
First, we apply \eqref{E:QSym} to \eqref{E:q}. Now, we are able to compare $c_{ih}$ (see \eqref{E:JBHc}) with $d_{hi}$ (see \eqref{E:BJd}). We notice that these coefficients depend, in a different way, on the Hahn polynomials with the same parameters. It can be checked that
\begin{align}
u_{ih} = &(-1)^{i-k-l}\frac{(\alpha+2l+1)_{i-k-l}(i+k+l+\sigma)_{n+1-k-l}
(\beta+2k+1)_{i-k-l}}{(2i+\sigma)(i-k-l)!(k+l-n)_{i-k-l}(\alpha+2l+1)_{n-l-h}(\beta+2k+1)_{h-k}}\nonumber\\
&\times\binom{n-k-l}{h-k}\binom{n}{h}^{-2} \qquad (i = k+l,k+l+1,\ldots,n;\;h=k,k+1,\ldots,n-l).\label{E:verth3}
\end{align}
Obviously, \eqref{E:verth1} and \eqref{E:verth2} follow from \eqref{E:verth3} for $h=k$ and $i=k+l$, respectively.
Finally, we can easily prove \eqref{E:rho1} and \eqref{E:rho2} using induction.
\end{proof}

\begin{remark}
In \cite{Doh14}, Doha et al.~presented generalized Jacobi-Bernstein basis transformations and used them in their algorithm of constrained degree reduction of B\'ezier curves. However, those transformations are performed there with the complexity $O(n^3)$. Recall that the \emph{generalized Jacobi polynomials} are closely related to the shifted Jacobi polynomials \eqref{E:Jac} in all four cases,
$$
\hat{J}^{(\alpha,\beta)}_i(x) =
\left\{\begin{array}{lr}
R^{(\alpha,\beta)}_{i}(x) &  (\alpha, \beta > -1),\\[1ex]
(1-x)^{-\alpha} R^{(-\alpha,\beta)}_{i+\alpha}(x) &  (\alpha \in \mathbb{Z}^{-}, \beta > -1),\\[1ex]
x^{-\beta} R^{(\alpha,-\beta)}_{i+\beta}(x) & (\alpha > -1, \beta \in \mathbb{Z}^{-}),\\[1ex]
(1-x)^{-\alpha} x^{-\beta} R^{(-\alpha,-\beta)}_{i+\alpha+\beta}(x) & (\alpha, \beta \in \mathbb{Z}^{-}).
\end{array}\right.
$$
As in the proofs of Lemmas \ref{L:JB} and \ref{L:BJ}, it can be shown that the connection coefficients of the generalized Jacobi-Bernstein basis transformations depend on Hahn polynomials in the following way:
\begin{align*}
&\hat{J}^{(\alpha,\beta)}_i(x) = \sum_{h=0}^{n+\alpha} \binom{n}{h}^{-1}\binom{n+\alpha}{h}a^{(n+\alpha,-\alpha,\beta)}_{i+\alpha, h}
B_h^n(x) \quad (i = -\alpha,-\alpha+1,\ldots,n;\;\alpha \in \mathbb{Z}^{-}, \beta > -1),\\[1ex]
&\hat{J}^{(\alpha,\beta)}_i(x) = \sum_{h=-\beta}^{n} \binom{n}{h}^{-1}\binom{n+\beta}{h+\beta}a^{(n+\beta,\alpha,-\beta)}_{i+\beta, h+\beta}
B_h^n(x) \quad (i = -\beta,-\beta+1,\ldots,n;\;\alpha > -1, \beta \in \mathbb{Z}^{-}),\\[1ex]
&\hat{J}^{(\alpha,\beta)}_i(x) = \sum_{h=-\beta}^{n+\alpha} \binom{n}{h}^{-1}\binom{n+\alpha+\beta}{h+\beta}a^{(n+\alpha+\beta,-\alpha,-\beta)}_{i+\alpha+\beta, h+\beta}
B_h^n(x)\\
&\hphantom{\hat{J} = \sum_{h=-\beta}^{n+\alpha} \binom{n}{h}^{-1}\binom{n+\alpha+\beta}{h+\beta}a^{(n+\alpha+\beta,-\alpha,-\beta)}_{i+\alpha+\beta, h+\beta}} \qquad\qquad\qquad\;
(i = -\alpha-\beta,-\alpha-\beta+1,\ldots,n;\;\alpha, \beta \in \mathbb{Z}^{-}),\\[1ex]
&B_h^{n}(x) = \sum_{i=-\alpha}^{n} \binom{n+\alpha}{h}^{-1}
\binom{n}{h} b_{h,i+\alpha}^{(n+\alpha,-\alpha,\beta)}\hat{J}^{(\alpha,\beta)}_i(x) \quad (h=0,1,\ldots,n+\alpha;\;\alpha \in \mathbb{Z}^{-}, \beta > -1),\\[1ex]
&B_h^{n}(x) = \sum_{i=-\beta}^{n} \binom{n+\beta}{h+\beta}^{-1}
\binom{n}{h} b_{h+\beta,i+\beta}^{(n+\beta,\alpha,-\beta)}\hat{J}^{(\alpha,\beta)}_i(x) \quad (h=-\beta,-\beta+1,\ldots,n;\;\alpha > -1, \beta \in \mathbb{Z}^{-}),\\[1ex]
&B_h^{n}(x) = \sum_{i=-\alpha-\beta}^{n} \binom{n+\alpha+\beta}{h+\beta}^{-1}
\binom{n}{h} b_{h+\beta,i+\alpha+\beta}^{(n+\alpha+\beta,-\alpha,-\beta)}\hat{J}^{(\alpha,\beta)}_i(x) \quad (h=-\beta,-\beta+1,\ldots,n+\alpha;\;\alpha, \beta \in \mathbb{Z}^{-}),
\end{align*}
where $a_{ih}^{(n,\alpha,\beta)}$ and $b_{hi}^{(n,\alpha,\beta)}$ are defined by \eqref{E:aih} and \eqref{E:bhi}, respectively.
Consequently, those coefficients can be computed with the complexity $O(n^2)$ using recurrence relations similar to the ones presented in Theorems \ref{T:T1}, \ref{T:T2}, \ref{T:T3} and \ref{T:T4}.
\end{remark}

\section{Examples}\label{Sec:Ex}

In this section, we compare the running times of our methods based on Theorems \ref{T:T1}, \ref{T:T2}, \ref{T:T3} and \ref{T:T4} with the running times of the methods from \cite{B16,LX16} based on equations \eqref{E:cLX} and \eqref{E:dLX}. The results were obtained on a computer with \texttt{Intel Core i5-3337U 1.8GHz} processor and \texttt{8GB} of \texttt{RAM}, using $16$-digit arithmetic. $\mbox{Maple}{\small \texttrademark}13$ worksheet containing programs and tests is available at \url{http://www.ii.uni.wroc.pl/~pgo/papers.html}.

The running times of the algorithms of computing $c_{ih}$ $(i=k+l,k+l+1,\ldots,n;\;h=k,k+1,\ldots,n-l)$ and $d_{hi}$ $(h=k,k+1,\ldots,n-l;\;i=k+l,k+l+1,\ldots,n)$ are given in Tables \ref{Tab:1} and \ref{Tab:2}, respectively.
For each choice of $\alpha$ and $\beta$ (a row in Tables \ref{Tab:1} and \ref{Tab:2}), each algorithm was executed for $n=5,6,\ldots,15$ hundred times (i.e., 1100 times in total). In the tables, we present total running times of the algorithms for each choice. The following choices of $\alpha$ and $\beta$ were considered:
\begin{itemize}\setlength{\itemindent}{0.35cm}
\item[(i)] fixed \textit{natural} choices of $\alpha$ and $\beta$ (see rows 1--5 of Tables \ref{Tab:1} and \ref{Tab:2});
\item[(ii)] 1100 random pairs $(\alpha, \beta) \in [-0.99,\,1.01) \times [-0.99,\,1.01)$ (see row 6 of Tables \ref{Tab:1} and \ref{Tab:2});
\item[(iii)] for each $n$, $\alpha = -0.9,-0.8,\ldots,9$; $\beta = 0.3,0.4,\ldots,10.2$ (see row 7 of Tables \ref{Tab:1} and \ref{Tab:2}).
\end{itemize}
For all tests, we set $k = l = 1$.

\begin{table}[H]
\captionsetup{margin=0pt, font={small}}
\centering
\ra{1.2}
\scalebox{0.99}{
\begin{tabular}{@{}cccccc@{}}
\toprule \multicolumn{2}{c}{Parameters} & \phantom{abc} & \multicolumn{3}{c}{Total running times [s]}
\\ \cmidrule{1-2} \cmidrule{4-6} $\alpha$ & $\beta$ & \phantom{abc} & Theorem \ref{T:T1} &
Theorem \ref{T:T2} & \cite{B16,LX16}
\\ \midrule
$0$ & $0$ && $1.016$ & $0.922$ & $1.953$\\
$0.5$ & $0.5$ && $1.016$ & $0.922$ & $2.234$\\
$-0.5$ & $-0.5$ && $1.015$ & $0.938$ & $2.047$\\
$-0.5$ & $0.5$ && $1.031$ & $0.906$ & $2.047$\\
$0.5$ & $-0.5$ && $1$ & $0.922$ & $2.094$\\
 \midrule
 random in $[-0.99,\,1.01)$  & random in $[-0.99,\,1.01)$ && $1.25$ & $1.078$ & $22.813$\\
  \midrule
 $-0.9,-0.8,\ldots,9$  & $0.3,0.4,\ldots,10.2$  && $1.125$ & $1$ & $3.937$
\end{tabular}}
\caption{Total running times of the algorithms of computing $c_{ih}$ $(i=k+l,k+l+1,\ldots,n;\;h=k,k+1,\ldots,n-l)$ for $n=5,6,\ldots,15$ hundred times with
different strategies of choosing $\alpha$ and $\beta$. For all tests, $k=l=1$.}
\label{Tab:1}
\end{table}

\newpage

\begin{table}[H]
\captionsetup{margin=0pt, font={small}}
\centering
\ra{1.2}
\scalebox{0.99}{
\begin{tabular}{@{}cccccc@{}}
\toprule \multicolumn{2}{c}{Parameters} & \phantom{abc} & \multicolumn{3}{c}{Total running times [s]}
\\ \cmidrule{1-2} \cmidrule{4-6} $\alpha$ & $\beta$ & \phantom{abc} & Theorem \ref{T:T3} &
Theorem \ref{T:T4} & \cite{B16,LX16}
\\ \midrule
$0$ & $0$ && $1.813$ & $1.437$ & $3.75$\\
$0.5$ & $0.5$ && $2.031$ & $1.469$ & $4.25$\\
$-0.5$ & $-0.5$ && $1.843$ & $1.422$ & $4.063$\\
$-0.5$ & $0.5$ && $1.859$ & $1.438$ & $3.984$\\
$0.5$ & $-0.5$ && $1.859$ & $1.453$ & $3.985$\\
 \midrule
 random in $[-0.99,\,1.01)$  & random in $[-0.99,\,1.01)$ && $2.25$ & $1.718$ & $72.813$\\
  \midrule
 $-0.9,-0.8,\ldots,9$  & $0.3,0.4,\ldots,10.2$  && $2$ & $1.594$ & $20.906$
\end{tabular}}
\caption{Total running times of the algorithms of computing $d_{hi}$ $(h=k,k+1,\ldots,n-l;\;i=k+l,k+l+1,\ldots,n)$ for $n=5,6,\ldots,15$ hundred times with
different strategies of choosing $\alpha$ and $\beta$. For all tests, $k=l=1$.}
\label{Tab:2}
\end{table}

Clearly, our methods are significantly faster than the methods from \cite{B16,LX16} in all considered cases.
Observe that the differences are very large in row 6 of Table \ref{Tab:1}, and rows 6, 7 of Table \ref{Tab:2}.
This is not only because of the difference in computational complexity but also because of some cumbersome
computations of the gamma functions that are required by the methods from \cite{B16,LX16} (see \eqref{E:cLX} and \eqref{E:dLX}).

\section{Conclusions}\label{Sec:Conc}

In the paper, we present efficient transformations between modified Jacobi and Bernstein bases of
the constrained space of polynomials of degree at most $n$. We notice that the searched connection
coefficients can be written in terms of Hahn and dual Hahn polynomials, which results in fast methods of
computing them using recurrence relations. The idea is a generalization of the one from \cite{Woz13},
where only the unconstrained case of the problem was solved. Our new methods have the complexity $O(n^2)$, whereas the complexity of other existing
algorithms is $O(n^3)$ (see \cite{B16,LX16}). Moreover, the comparison of running times shows that our methods are also faster in practice.
Consequently, the methods of constrained degree reduction of B\'ezier curves from \cite{B16,LX16} can be significantly improved.

\bibliographystyle{plain}


\end{document}